\newcommand{\MM}{{\cal M}}
\newcommand{\BR}{{\mathbb R}}
\newcommand{\dyw}{\mbox{\rm div}}
\newtheorem{theorem}{\bf Theorem}[section]
\newtheorem{proposition}[theorem]{\bf Proposition}
\newtheorem{lemma}[theorem]{\bf Lemma}
\theoremstyle{definition}
\newtheorem*{definition}{Definition}
\newtheorem{remark}[theorem]{Remark}
\numberwithin{equation}{section}
\begin{document}

\title {On  the structure of diffuse measures for parabolic capacities}
\author {Tomasz Klimsiak and Andrzej Rozkosz\\
{\small Faculty of Mathematics and Computer Science,
Nicolaus Copernicus University} \\
{\small  Chopina 12/18, 87--100 Toru\'n, Poland}\\
{\small E-mail addresses: tomas@mat.umk.pl (T. Klimsiak), rozkosz@mat.umk.pl (A. Rozkosz)}}
\date{}
\maketitle
\begin{center}
{\bf Abstract}
\end{center}
{\small
Let $Q=(0,T)\times\Omega$, where $\Omega$ is a bounded open subset of $\BR^d$.
We consider the parabolic $p$-capacity on  $Q$
naturally associated with the usual $p$-Laplacian.
Droniou, Porretta and Prignet have shown that if a bounded Radon
measure $\mu$ on  $Q$ is diffuse, i.e. charges no set of zero $p$-capacity, $p>1$, then
it is of the form
$\mu=f+\mbox{div}(G)+g_t$
for some $f\in L^1(Q)$, $G\in (L^{p'}(Q))^d$ and $g\in
L^p(0,T;W^{1,p}_0(\Omega)\cap L^2(\Omega))$. We show the converse of this result: if $p>1$, then each bounded Radon measure
$\mu$ on $Q$ admitting such a decomposition is diffuse. }





\section{Introduction}
\label{sec1}

Let $\Omega$ be a bounded open set  in $\BR^d$ and
$Q=(0,T)\times\Omega$ for some $T>0$. For $p>1$, the
parabolic $p$-capacity of an open subset $U$ of $Q$ is defined by  (see \cite{DPP,Pi})
\[
\mbox{cap}_p(U)=\inf\{\|u\|_W: u\in W,\,u\ge \mathbf{1}_{U}\mbox{ a.e. in } Q\},
\]
where $W=\{u\in L^p(0,T;V):u_t\in L^{p'}(0,T;V')\}$,
$V=W^{1,p}_0(\Omega)\cap L^2(\Omega)$ and $V'$ is the dual of $V$;
we endow $V$ with the norm
$\|u\|_V=\|u\|_{W^{1,p}_0(\Omega)}+\|u\|_{L^2(\Omega)}$, and $W$
with the norm
$\|u\|_{W}=\|u_t\|_{L^{p'}(0,T;V')}+\|u\|_{L^{p}(0,T;V)}$. The
capacity $\mbox{cap}_p$ is then extended to arbitrary Borel subset
of $Q$ in the usual way.

Let $\MM_b(Q)$ denote the space of all (signed) bounded   Radon
measures on $Q$ equipped with the norm $\|\mu\|_{TV}=|\mu|(Q)$,
where $|\mu|$ stands for the variation of $\mu$.  We call
$\mu\in\MM_b(Q)$ diffuse if it charges no set of zero parabolic
$p$-capacity, i.e. if $\mu(B)=0$ for any Borel $B\subset Q$ such
that $\mbox{cap}_p(B)=0$. We denote by $\MM_{0,b}(Q)$  the subset
of $\MM_b(Q)$ consisting of all diffuse measures.  Droniou, Poretta
and Prignet \cite{DPP} have shown that for every $\mu\in\MM_{0,b}(Q)$ there exists
$f\in L^1(Q)$, $G=(G^1,\dots,G^d)$ with $G^i\in L^{p'}(Q)$,
$i=1,\dots,d$, and $g\in L^p(0,T;V)$ such that
\begin{equation}
\label{eq1.1}
\mu=f+\mbox{div}(G)+g_t.
\end{equation}
The decomposition
(\ref{eq1.1}) plays crucial role in the study of evolution
problems with measure data whose model example is
\begin{equation}
\label{eq1.2}
\begin{cases}
u_t-\Delta_pu+h(u)=\mu &\mbox{in }Q,\\
u=u_0 &\mbox{on }\{0\}\times\Omega,\\
u=0 &\mbox{on }(0,T)\times\partial\Omega,
\end{cases}
\end{equation}
where $\Delta_pu=\dyw(|\nabla u|^{p-2}\nabla u)$ is the usual
$p$-Laplace operator, $p>1$, $u_0\in L^1(\Omega)$ and
$h:\BR\rightarrow\BR$ (see \cite{DPP,Pe,PPP2}).

The decomposition
(\ref{eq1.1}) is a counterpart to the decomposition of diffuse
measures proved in the stationary case  by Boccardo, Gallou\"et
and Orsina \cite{BGO} (see also \cite{KR:BPAN} for an extension to
the Dirichlet forms setting). In the stationary case, each finite
Borel measure $\mu$ on $\Omega$ that charges no set of zero $p$-capacity
admits decomposition of the form
\begin{equation}
\label{eq1.3} \mu=f+\mbox{div}(G),
\end{equation}
where $f\in L^1(\Omega)$, $G=(G^1,\dots  G^d)$ with $G^i\in
L^{p'}(\Omega)$, $i=1,\dots,d$. The decomposition (\ref{eq1.3}) proved to be
important and useful in the study of elliptic equations with measure data
(see, e.g., \cite{BMP,DMOP,DPP,MP}).

In the stationary case it is also known  that  if $\mu$ is a
bounded Borel measure on $\Omega$ admitting decomposition
(\ref{eq1.3}), then it is diffuse  (see \cite{BGO} and also
\cite{KR:BPAN} for  a related result concerning the capacity associated
with a general Dirichlet operator). In the parabolic  setting only a
partial result in this direction is known. The difficulty is caused
by the term $g_t$ appearing in (\ref{eq1.1}). Petitta, Ponce
and Porretta \cite{PPP2} (see also \cite{PPP1}) have shown that
if $\mu\in\MM_b(Q)$ admits decomposition (\ref{eq1.1}) with $g$
having the additional property that $g\in L^{\infty}(Q)$, then
$\mu$ is indeed diffuse. The problem whether one can dispense with
this additional assumption was left  open. It is worth noting here
that not every diffuse measure can be written in the form
(\ref{eq1.1}) with bounded $g$ (see \cite{PPP1,PPP2}).

In this note we show that if $p>1$, then in the parabolic case the  situation is the same as in the stationary case, i.e. if $\mu\in\MM_{b}(Q)$ satisfies  (\ref{eq1.1}), then it is diffuse.

\section{Main result}

Define $V,V',W$ as in Section \ref{sec1}. We denote by
$\langle\cdot,\cdot\rangle$ the duality pairing  between $V'$ and
$V$, and by $\langle\langle\cdot,\cdot\rangle\rangle$ the duality
pairing between  the dual space $W'$ of $W$ and $W$.

We start with recalling decompositions of $\Phi\in W'$ and
$\mu\in\MM_{0,b}(Q)$ proved in \cite{DPP}.

\begin{lemma}
\label{th2.1} For every $\Phi\in W'$ there exist $h\in L^{p'}(0,T;L^2(\Omega))$,
$g\in L^{p}(0,T;V)$ $G=(G^1,\dots,G^d)$ with $G^i\in
L^{p'}(Q)$, $i=1,\dots,d$ such that for every $u\in W$,
\begin{equation}
\label{eq2.1}
\langle\langle\Phi,u\rangle\rangle=\int_Qhu\,dt\,dx
-\int_QG\nabla u\,dt\,dx-\int^T_0\langle u_t,g\rangle\,dt.
\end{equation}
\end{lemma}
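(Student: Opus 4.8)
The plan is to realize $W$, up to equivalence of norms, as a subspace of a product of Lebesgue--Bochner spaces and then to combine the Hahn--Banach theorem with the duality theory of such spaces. Set
$$
X=(L^p(Q))^d\times L^p(0,T;L^2(\Omega))\times L^{p'}(0,T;V'),
$$
equip it with the sum of the three factor norms, and define $J:W\to X$ by $J(u)=(\nabla u,u,u_t)$. First I would verify that $J$ is a linear homeomorphism of $W$ onto its image. The only point requiring care is the lower bound $\|u\|_W\le C\|J(u)\|_X$: by the Poincar\'e inequality $\|v\|_{W^{1,p}_0(\Omega)}$ is equivalent to $\|\nabla v\|_{L^p(\Omega)}$, so $\|u\|_{L^p(0,T;V)}$ is equivalent to $\|\nabla u\|_{(L^p(Q))^d}+\|u\|_{L^p(0,T;L^2(\Omega))}$, and adding $\|u_t\|_{L^{p'}(0,T;V')}$ to both sides yields $\|u\|_W\asymp\|J(u)\|_X$.

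Next, $\Phi\in W'$ induces a bounded linear functional $\Phi\circ J^{-1}$ on the subspace $J(W)\subset X$, which by Hahn--Banach extends to some $\widetilde\Phi\in X'$. The dual of $X$ is the product of the duals of the three factors. Since $L^2(\Omega)$ is a separable Hilbert space, $(L^p(0,T;L^2(\Omega)))'=L^{p'}(0,T;L^2(\Omega))$, and $((L^p(Q))^d)'=(L^{p'}(Q))^d$ because $1<p<\infty$. Hence $\widetilde\Phi$ is represented by a triple consisting of some $\widetilde G\in(L^{p'}(Q))^d$, some $h\in L^{p'}(0,T;L^2(\Omega))$, and an element playing the role of $g$, so that for all $u\in W$,
$$
\langle\langle\Phi,u\rangle\rangle=\int_Q\widetilde G\,\nabla u\,dt\,dx+\int_Qhu\,dt\,dx+\int_0^T\langle u_t,g\rangle\,dt;
$$
replacing $\widetilde G$ and $g$ by their negatives (and keeping $h$) then gives exactly (\ref{eq2.1}).

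The key step, and the one I expect to be the main obstacle, is the identification $(L^{p'}(0,T;V'))'=L^p(0,T;V)$, which guarantees that $g$ can be chosen in $L^p(0,T;V)$ and that the last term above is genuinely the duality pairing $\int_0^T\langle u_t,g\rangle\,dt$. To justify it I would first show that $V=W^{1,p}_0(\Omega)\cap L^2(\Omega)$ is a separable reflexive Banach space: the map $v\mapsto(v,v)$ embeds $V$ isometrically as a closed subspace of the separable reflexive product $W^{1,p}_0(\Omega)\times L^2(\Omega)$ (reflexive since $1<p<\infty$), and closed subspaces of separable reflexive spaces inherit both properties. Consequently $V'$ is again separable and reflexive, $V''=V$, and both $V'$ and $V$ have the Radon--Nikodym property. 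Since $p'<\infty$, the standard Bochner-space duality then yields $(L^{p'}(0,T;V'))'=L^p(0,T;(V')')=L^p(0,T;V)$, with the pairing acting as $(w,g)\mapsto\int_0^T\langle w(t),g(t)\rangle\,dt$. Restricting $\widetilde\Phi$ to $J(W)$ and substituting $w=u_t$ completes the argument.
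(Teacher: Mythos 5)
Your proposal is correct and is in substance the same argument as the one the paper relies on: the paper gives no proof of its own but cites \cite[Lemma 2.24]{DPP}, and that proof proceeds exactly as yours does---embed $W$ into a product of Lebesgue--Bochner spaces via $u\mapsto(\nabla u,u,u_t)$ (an isomorphism onto its image by Poincar\'e's inequality), extend $\Phi$ by Hahn--Banach, and represent the dual of each factor, the key identification $(L^{p'}(0,T;V'))'=L^p(0,T;V)$ resting on the reflexivity (hence Radon--Nikodym property) of $V$, which you verify correctly by realizing $V$ as a closed subspace of $W^{1,p}_0(\Omega)\times L^2(\Omega)$. Nothing essential is missing from your write-up.
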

\begin{proof}
See \cite[Lemma 2.24]{DPP}.
\end{proof}

If $\Phi\in W'$ satisfies (\ref{eq2.1}), then we write
\[
\Phi=h+\dyw G+g_t.
\]

\begin{theorem}
\label{th2.2} If $\mu\in\MM_{0,b}(Q)$, then there exists $f\in
L^1(Q)$,  $g\in L^p(0,T;V)$ and $G=(G^1,\dots,G^d)$ with $G^i\in
L^{p'}(Q)$, $i=1,\dots,d$, such that for every $\eta\in C^{\infty}_c([0,T]\times\Omega)$,
\begin{equation}
\label{eq2.2}
\int_Q\eta\,d\mu=\int_Qf\eta\,dt\,dx
-\int_QG\cdot\nabla\eta\,dt\,dx -\int^T_0\langle\eta_t,g\rangle\,dt.
\end{equation}
\end{theorem}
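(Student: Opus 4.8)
The plan is to deduce the statement from Lemma \ref{th2.1} by showing that every diffuse measure differs from an element of $W'$ only by an $L^1(Q)$ function. First I would invoke the Jordan decomposition $\mu=\mu^+-\mu^-$: a Borel set of zero parabolic capacity is charged neither by $\mu^+$ nor by $\mu^-$, so both parts lie in $\MM_{0,b}(Q)$, and since the right-hand side of (\ref{eq2.2}) depends linearly on $(f,G,g)$ it suffices to produce the decomposition for a nonnegative $\mu\in\MM_{0,b}(Q)$. The target reduction is to prove that $\mu=f_0+\Phi$ with $f_0\in L^1(Q)$ and $\Phi\in W'$.

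For that central step I would use that a nonnegative diffuse measure is absolutely continuous with respect to the parabolic capacity, hence (after localizing along a capacitary nest, if needed) with respect to some nonnegative measure $\lambda$ of finite energy, meaning $v\mapsto\int_Q v\,d\lambda$ is continuous on $W$, so that $\lambda\in W'$. By the Radon--Nikodym theorem $\mu=\varphi\lambda$ with $0\le\varphi\in L^1(\lambda)$. Truncating, $\varphi=(\varphi\wedge k)+(\varphi-k)^+$: the tail $(\varphi-k)^+\lambda$ has total variation tending to $0$ as $k\to\infty$, while the bounded factor is expected to give $(\varphi\wedge k)\lambda\in W'$. Passing to the limit, controlled by a priori estimates for the auxiliary problems $u_t-\Delta_p u=(\varphi\wedge k)\lambda$ (energy bounds in $L^p(0,T;V)$ together with the $L^1$-type bounds dictated by the structure of the equation), one separates $\mu$ into a part bounded in $W'$ and an $L^1(Q)$ remainder, yielding $\mu=f_0+\Phi$.

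It then remains to apply Lemma \ref{th2.1} to $\Phi$, obtaining $\Phi=h+\dyw G+g_t$ with $h\in L^{p'}(0,T;L^2(\Omega))$, $g\in L^p(0,T;V)$ and $G^i\in L^{p'}(Q)$. Since $\Omega$ is bounded and $(0,T)$ has finite length, Hölder's inequality gives $L^{p'}(0,T;L^2(\Omega))\hookrightarrow L^1(Q)$, so $h\in L^1(Q)$; putting $f=f_0+h$ and testing against $\eta\in C^\infty_c([0,T]\times\Omega)\subset W$ produces exactly (\ref{eq2.2}).

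The main obstacle is the time-derivative term $g_t$, which has no elliptic counterpart. In the space $W$ the natural truncation and modulus operations need not preserve membership in $W$, since the $V'$-valued time derivative is not controlled under them; consequently both the claim $(\varphi\wedge k)\lambda\in W'$ and the limit $k\to\infty$ are delicate and demand genuinely parabolic uniform estimates together with a weak-compactness argument ensuring that the limiting $g$ lies in $L^p(0,T;V)$ rather than being a mere distribution. This is precisely the difficulty flagged in the introduction and the reason the stationary argument of \cite{BGO} does not transfer verbatim.
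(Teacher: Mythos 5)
There is a genuine gap — in fact three. First, note that the paper does not prove Theorem \ref{th2.2} at all: it is quoted from \cite[Theorem 2.28]{DPP}. The proof there goes: reduce to $\mu\ge 0$ by the Jordan decomposition (as you do); invoke the approximation theorem going back to Pierre \cite{Pi}, by which every nonnegative diffuse measure is the limit, strongly in total variation, of a nondecreasing sequence $(\mu_n)\subset\MM_b(Q)\cap W'$; apply to each increment $\mu_{n+1}-\mu_n$ a refined form of Lemma \ref{th2.1} established in \cite{DPP}, namely that every $\nu\in\MM_b(Q)\cap W'$ admits a decomposition $\nu=f+\dyw G+g_t$ with $\|f\|_{L^1(Q)}\le\|\nu\|_{TV}+\epsilon$ and $\|G\|_{L^{p'}(Q)}+\|g\|_{L^p(0,T;V)}\le\epsilon$ for any prescribed $\epsilon>0$; then telescope and sum the three resulting series. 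Your skeleton (split $\mu$ into an $L^1(Q)$ part plus a $W'$ part, apply Lemma \ref{th2.1}, absorb $h\in L^{p'}(0,T;L^2(\Omega))\subset L^1(Q)$ into $f$) is compatible with this, and that last step is fine; but the three steps you leave open are precisely the substance of the theorem.

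Concretely: (a) the existence of a finite-energy measure $\lambda\in\MM_b(Q)\cap W'$ with $\mu\ll\lambda$ is not a soft consequence of diffuseness. In the elliptic case the analogous statement (\cite[Lemma 2.3]{DMOP}) is deduced \emph{from} the decomposition theorem of \cite{BGO}, so taking it as a starting point risks circularity; in the parabolic case the only available substitute is Pierre's approximation theorem, i.e.\ the very input that drives the proof in \cite{DPP}. (b) Your claim $(\varphi\wedge k)\lambda\in W'$ rests on the principle that a nonnegative measure dominated by a finite-energy measure has finite energy. The elliptic proof of that principle tests with $|v|$ and uses the lattice property $\|\,|v|\,\|_{W^{1,p}_0(\Omega)}=\|v\|_{W^{1,p}_0(\Omega)}$; in $W$ the map $v\mapsto|v|$ does not even map $W$ into $W$ (the $V'$-valued time derivative is not controlled under it), so the argument collapses. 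You flag this yourself, but flagging is not repairing: this is exactly where the parabolic case departs from \cite{BGO}, and no actual estimate is supplied. (c) Even granting (a) and (b), the limit $k\to\infty$ does not close as described: the tail $(\varphi-k)^+\lambda$ is small in total variation, but a TV-small measure is not thereby close to an $L^1(Q)$ function, and the $W'$-norms of $(\varphi\wedge k)\lambda$ need not stay bounded. Converting ``finite energy with small total variation'' into ``small $L^1$ part plus small $G$ and $g$'' is exactly what the refined decomposition lemma of \cite{DPP} provides, and it is then exploited through a telescoping series, not through weak compactness; a priori estimates for $u_t-\Delta_p u=(\varphi\wedge k)\lambda$ bound solutions, they do not decompose right-hand sides.
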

\begin{proof}
See \cite[Theorem 2.28]{DPP}.
\end{proof}


\begin{definition}
Let $\Phi\in W'$.  We say that  $w\in L^p(0,T;V)$ is a weak solution to
the  Cauchy-Dirichlet problem
\begin{equation}
\label{eq2.4} w_t-\Delta_pw=\Phi,\qquad w(0,\cdot)=0,\qquad
w=0\,\mbox{  on  }\, (0,T)\times\partial \Omega
\end{equation}
if
\[
-\int^T_0\langle\eta_t,w\rangle\,dt +\int_{Q}|\nabla w|^{p-2}\nabla w\,
\nabla\eta\,dt\,dx=\langle\langle \Phi,\eta\rangle\rangle
\]
for all $\eta\in W$ with $\eta(T,\cdot)=0$.
\end{definition}



In what follows, $\{j_n\}$ is a  family of symmetric mollifiers defined on $\BR\times\BR^d$.
For a given $\Phi\in W'$ and a given decomposition  (\ref{eq2.1}) with $h,G,g$ having compact supports in $Q$, we define (for sufficiently large $n\ge 1$) $\Phi_n\in W'$ by
\begin{align}
\label{eq2.n}
\langle\langle \Phi_n,u\rangle\rangle =\int_Q h_n u\,dt\,dx&-\int_Q G_n\nabla u\,dt\,dx
-\int^T_0\langle  g_n, u_t\rangle\,dt,\quad u\in W,
\end{align}
where $h_n=h* j_n$, $G_n= G*j_n$ and $g_n= g*j_n$.
\begin{proposition}
\label{prop2.3}
Let $\Phi\in W'$.
\begin{enumerate}[\rm(i)]
\item There exists a unique weak solution $w$  to {\rm{(\ref{eq2.4})}}.

\item Assume that $\Phi$ admits decomposition \mbox{\rm(\ref{eq2.1})}  with some
$h,G,g$ having compact supports in $Q$.
Let $\Phi_n$ be given by {\rm (\ref{eq2.n})}
and let $w_n$ be a weak solution to the  problem
\[
(w_n)_t-\Delta_pw_n=\Phi_n,\qquad w_n(0,\cdot)=0,\qquad
w_n=0\,\mbox{  on  }(0,T)\times\partial\Omega.
\]
Then  $w_n\rightarrow w$ in $L^p(0,T;V)$.
\end{enumerate}
\end{proposition}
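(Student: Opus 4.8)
The plan is to handle both parts by reducing the problem, via the substitution $v:=w-g$, to a standard parabolic equation with right-hand side in $L^{p'}(0,T;V')$. The starting observation is that although a weak solution $w$ of (\ref{eq2.4}) lives only in $L^p(0,T;V)$, the combination $v=w-g$ automatically belongs to $W$: testing the weak formulation against $\eta\in C^\infty_c(Q)$ shows that $v_t=\Delta_pw+h+\dyw G$ in the distributional sense, and the right-hand side lies in $L^{p'}(0,T;V')$ since $|\nabla w|^{p-2}\nabla w\in(L^{p'}(Q))^d$. Rewriting (\ref{eq2.1}) accordingly, one checks that $w$ is a weak solution of (\ref{eq2.4}) if and only if $v\in W$ solves $v_t-\Delta_p(v+g)=h+\dyw G$ with $v(0,\cdot)=0$, the initial condition being encoded by the admissible test functions $\eta(T,\cdot)=0$. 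The operator $\xi\mapsto-\Delta_p(\xi+g(t))$ from $V$ to $V'$ is bounded, hemicontinuous and strictly monotone, and after absorbing the term $\int_Q|\nabla(v+g)|^{p-2}\nabla(v+g)\cdot\nabla g$ by Young's inequality it is coercive; hence existence of $v$, and thus of $w$, follows from the classical theory of monotone parabolic operators. Uniqueness follows by subtracting the equations for two solutions $w_1,w_2$, pairing with $v_1-v_2\in W$ (which vanishes at $t=0$) and using the energy identity $\int^T_0\langle(v_1-v_2)_t,v_1-v_2\rangle\,dt=\tfrac12\|(v_1-v_2)(T)\|^2_{L^2}$ together with $\nabla(v_1-v_2)=\nabla(w_1-w_2)$ and strict monotonicity of the $p$-Laplacian.

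For (ii) I would first record that, since $h,G,g$ have compact support in $Q$, for large $n$ the mollifications satisfy $h_n\to h$ in $L^{p'}(0,T;L^2(\Omega))$, $G_n\to G$ in $(L^{p'}(Q))^d$ and $g_n\to g$ in $L^p(0,T;V)$, with $g_n(0,\cdot)=0$. Writing $v_n:=w_n-g_n\in W$, testing the equation for $v_n$ against $v_n$ and absorbing the arising terms by Young's inequality gives bounds, uniform in $n$, for $w_n$ in $L^p(0,T;V)$ and for $v_n$ in $L^\infty(0,T;L^2(\Omega))\cap W$ (the bound on $(v_n)_t$ coming from the equation itself). Passing to a subsequence, $w_n\rightharpoonup\bar w$ in $L^p(0,T;V)$, $v_n\rightharpoonup\bar v=\bar w-g$ in $W$, and $|\nabla w_n|^{p-2}\nabla w_n\rightharpoonup\chi$ in $(L^{p'}(Q))^d$, so that the limit satisfies $\bar v_t-\dyw\chi=h+\dyw G$ with $\bar v(0,\cdot)=0$.

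The heart of the matter, and the step I expect to be the main obstacle, is the identification $\chi=|\nabla\bar w|^{p-2}\nabla\bar w$ by Minty's monotonicity method, which hinges on the inequality $\limsup_n\int_Q|\nabla w_n|^p\,dt\,dx\le\int_Q\chi\cdot\nabla\bar w\,dt\,dx$. To obtain it I would start from the energy identity for $v_n$, which expresses $\int_Q|\nabla w_n|^p$ in terms of $-\tfrac12\|v_n(T)\|^2_{L^2}$, $\int_Q|\nabla w_n|^{p-2}\nabla w_n\cdot\nabla g_n$, $\int_Q h_nv_n$ and $\int_Q G_n\cdot\nabla v_n$. All but the first of these are products of a weakly convergent and a strongly convergent factor and pass to the limit, while continuity of the trace map $W\ni v\mapsto v(T)\in L^2(\Omega)$ gives $v_n(T)\rightharpoonup\bar v(T)$ and hence $\limsup_n(-\tfrac12\|v_n(T)\|^2_{L^2})\le-\tfrac12\|\bar v(T)\|^2_{L^2}$. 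Comparing the resulting bound with the energy identity obtained by testing the limit equation against $\bar v$ yields exactly $\limsup_n\int_Q|\nabla w_n|^p\le\int_Q\chi\cdot\nabla\bar w$. The usual Minty computation — expanding $\int_Q(|\nabla w_n|^{p-2}\nabla w_n-|\nabla\phi|^{p-2}\nabla\phi)\cdot\nabla(w_n-\phi)\ge0$, taking the limsup, then choosing $\phi=\bar w-\lambda\psi$ and letting $\lambda\downarrow0$ — then forces $\chi=|\nabla\bar w|^{p-2}\nabla\bar w$. Thus $\bar w$ solves (\ref{eq2.4}), so $\bar w=w$ by part (i), and the whole sequence converges weakly.

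Finally, to upgrade to strong convergence in $L^p(0,T;V)$ I would combine $\limsup_n\int_Q|\nabla w_n|^p\le\int_Q|\nabla w|^p$ with weak lower semicontinuity of the norm to deduce $\|\nabla w_n\|_{L^p(Q)}\to\|\nabla w\|_{L^p(Q)}$; since $\nabla w_n\rightharpoonup\nabla w$ in the uniformly convex space $(L^p(Q))^d$, this gives $\nabla w_n\to\nabla w$ strongly. For the $L^p(0,T;L^2(\Omega))$ component of the $V$-norm I would test the difference between the equation for $v_n$ and the identified limit equation against $v_n-\bar v$; the monotonicity term and the right-hand side both tend to $0$ because $\nabla w_n\to\nabla w$ and $\nabla g_n\to\nabla g$ strongly, whence $\sup_t\|(v_n-\bar v)(t)\|_{L^2}\to0$ and therefore $w_n=v_n+g_n\to w$ in $L^p(0,T;L^2(\Omega))$. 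Together with the gradient convergence this yields $w_n\to w$ in $L^p(0,T;V)$, and uniqueness of the limit removes the passage to a subsequence.
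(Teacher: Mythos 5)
Your proof is correct, and it takes a genuinely different route from the paper's. The paper handles part (i) by citing \cite[Theorem 3.1]{DPP}, and proves part (ii) by re-entering that proof: it checks that $w_n-g_n\in W$ satisfies the approximate weak formulation, invokes the estimates at the end of the proof of \cite[Theorem 3.1]{DPP} together with the gradient-convergence lemma \cite[Lemma 5]{BG} to obtain $\nabla w_n\rightarrow\nabla w$ in $L^p(Q)$, and then uses boundedness of $\{w_n-g_n\}$ in $W$, Simon's compactness theorem \cite[Corollary 4]{Si} and uniqueness of the solution to conclude $w_n\rightarrow w$ in $L^p(Q)$, hence in $L^p(0,T;V)$. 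You instead give a self-contained monotone-operator argument: after the substitution $v=w-g$ (and your observation that any weak solution yields $v\in W$, which is indeed the right way to encode the initial condition and to run uniqueness), existence follows from the classical theory, and part (ii) is proved by uniform energy bounds, weak compactness, Minty's trick, the Radon--Riesz property of $L^p$, and a final energy estimate on $v_n-\bar v$. What your route buys is independence from external compactness machinery (no Aubin--Lions--Simon theorem, no Boccardo--Gallou\"et lemma); the price is that you must establish the limsup energy inequality yourself, which you do correctly via weak continuity of the trace $W\ni v\mapsto v(T)\in L^2(\Omega)$ and weak lower semicontinuity of the norm. Two minor points to tighten: first, Minty's argument as you run it identifies $\mbox{div}\,\chi=\mbox{div}\bigl(|\nabla\bar w|^{p-2}\nabla\bar w\bigr)$ only as elements of $L^{p'}(0,T;W^{-1,p'}(\Omega))$, not as a pointwise equality of vector fields; this functional identity is, however, all your subsequent steps use (they involve $\chi$ only through integrals $\int_Q\chi\cdot\nabla\varphi\,dt\,dx$), and the pointwise identity follows a posteriori from the strong convergence of gradients. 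Second, coercivity of $\xi\mapsto-\Delta_p(\xi+g(t))$ must be read against the full norm of $V=W^{1,p}_0(\Omega)\cap L^2(\Omega)$; for small $p$ this requires the standard $\lambda\|\cdot\|^2_{L^2(\Omega)}$ correction permitted in the classical monotone parabolic theory, exactly as in \cite{DPP}.
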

\begin{proof}
Part (i) is proved in  \cite[Theorem 3.1]{DPP}. To prove (ii), we modify slightly the proof of
\cite[Theorem 3.1]{DPP}. By the definition of a weak solution and (\ref{eq2.n}), for sufficiently large $n\ge1$,
\[
-\int^T_0\langle\eta_t,w_n-g_n\rangle\,dt +\int_{Q}|\nabla w_n|^{p-2}\nabla w_n\,
\nabla\eta\,dt\,dx=\int_Qh_n \eta\,dt\,dx
+\int^T_0\langle\chi_n,\eta\rangle\,dt,
\]
for every $\eta\in C_c^\infty([0,T]\times D)$ such that $\eta(T)=0$.
From the above equality it follows that  $w_n-g_n\in W$ and, by a standard approximation argument, that
\begin{align*}
&-\int^t_0\langle\eta_s,w_n-g_n\rangle\,ds +(\eta(t),(w_n-g_n)(t))_{L^2(\Omega)}
+\int_0^t\int_{\Omega}|\nabla w_n|^{p-2}\nabla w_n\,
\nabla\eta\,ds\,dx\\
&\qquad\qquad=\int_0^t\int_\Omega h_n \eta\,ds\,dx
+\int^t_0\langle \chi_n,\eta\rangle\,ds,\quad t\in(0,T],
\end{align*}
for every $\eta\in W$.
Therefore from the proof of  \cite[Theorem 3.1]{DPP} (see the last two equations in \cite[page 131]{DPP}) and \cite[Lemma 5]{BG} it follows that  $\nabla w_n\rightarrow \nabla w$  in $L^p(Q)$ and $w_n\rightarrow w$ in $L^p(0,T;L^2(\Omega))$.
By this and  \cite[(3.6)]{DPP} (see also the comment following it),
the sequence $\{w_n-g_n\}$ is bounded in $W$. Therefore, by \cite[Corollary 4]{Si}
and uniqueness of the solution to (\ref{eq2.4}), $w_n-g_n\rightarrow u-g$ in $L^p(Q)$. Since $g_n\rightarrow g$ in $L^p(Q)$, it follows that $w_n\rightarrow w$ in $L^p(Q)$. By what has been proved, $w_n\rightarrow w$ in $L^p(0,T;V)$.
\end{proof}

Lemma \ref{prop2.1}  below is the key to proving our main result. To state and prove it, we need some more notation.

Since $\mbox{cap}_p$ is subadditive (see \cite[Proposition 2.8]{DPP}),
each  $\mu\in\MM_b(Q)$ has a unique decomposition (see \cite{FST})
of the form
\begin{equation}
\label{eq2.03}
\mu=\mu_d+\mu_c\,,
\end{equation}
where $\mu_d\in\MM_{0,b}(Q)$ (the diffuse part of $\mu$) and
$\mu_c\in\MM_b(Q)$ is concentrated on a set  of
zero $p$-capacity (the so-called concentrated part of $\mu$).
For  $\mu\in\MM_b(Q)$ with decomposition (\ref{eq2.03}), we set
\[
 \mu_n=\mu *j_n,\qquad \mu^n_d=\mu_d *j_n,\qquad \mu^n_c=\mu_c *j_n.
\]
We denote by $\omega(n,m)$ (resp. $\omega(n,\delta)$)  any quantity such that
\[
\lim_{m\rightarrow \infty}\limsup_{n\rightarrow \infty}|\omega(n,m)|=0\quad\mbox{(resp.}\quad \lim_{\delta\downarrow0}\limsup_{n\rightarrow \infty}|\omega(n,\delta)|=0).
\]
For $m>0$, we set $T_m(t)=((-m)\wedge t)\vee m$, $t\in\BR$.

Let $D$ be an open subset of $Q$. We denote by $ \MM_b(D)\cap W'$ the set of elements $\Phi\in W'$ for which  there exists $c>0$ such that $|\langle\langle\Phi,\eta\rangle\rangle|\le c\|\eta\|_\infty,\, \eta\in C_c^\infty(D)$.
For given $\Phi\in  \MM_b(D)\cap W'$, we denote by $\Phi^{\mbox{\tiny meas},D}\in \MM_b(Q)$ the unique  measure such that
\[
\langle\langle\Phi,\eta\rangle\rangle=\int_D\eta\,d\Phi^{\mbox{\tiny meas},D},\quad \eta\in C_c^\infty(D)
\]
(see the comments following \cite[Definition 2.22]{DPP}).

\begin{remark}
In the proof of Lemma \ref{prop2.1},  we will use  \cite[Lemma 5]{Pe}, which was proved in \cite{Pe}  under the  assumption  that $p>(2d+1)/(d+1)$.
A close inspection of the proof of \cite[Lemma 5]{Pe} reveals that this additional assumption on $p$ is unnecessary.
The reason is that  this assumption on $p$ is needed in \cite{Pe} to apply
\cite[Lemma 4]{Pe}. However, from \cite[Remark 2.3]{DPP} it follows that the assertion of \cite[Lemma 4]{Pe} holds true for any $p>1$.
\end{remark}

\begin{lemma}
\label{prop2.1}
Let $D$ be an open subset of $Q$ and $\Phi\in\MM_b(D)\cap W'$.
Assume that $\Phi$ admits decomposition \mbox{\rm(\ref{eq2.1})}  with some
$h,G,g$ having compact supports in $Q$ and by  $u_n\in L^p(0,T;V)$ denote a weak solution to the
problem
\begin{equation}
\label{eq2.6} (u_n)_t-\Delta_pu_n=\Phi_n,\qquad u_n(0,\cdot)=0,\qquad
u_n=0\,\mbox{  on  }\, (0,T)\times\partial\Omega
\end{equation}
with $\Phi_n$ defined by \mbox{\rm(\ref{eq2.n})}. Then for every $\eta\in C_c^\infty(D)$,
\begin{align}
\label{eq2.7}
&\lim_{m\rightarrow \infty}\limsup_{n\rightarrow \infty}I(n,m)
=\int_D\eta\,d(\Phi^{\mbox{\tiny meas},D})_c\,,
\end{align}
where
\[
I(n,m)=\frac{1}{m}\int_{\{m\le u_n\le 2m\}}|\nabla u_n|^p\eta\,dt\,dx
-\frac{1}{m}\int_{\{-2m\le u_n \le -m\}}|\nabla u_n|^p\eta\,dt\,dx.
\]
\end{lemma}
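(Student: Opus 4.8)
The plan is to obtain \eqref{eq2.7} by testing the approximate equation \eqref{eq2.6} with a suitable odd truncation multiplied by $\eta$, and then letting $n\to\infty$ (which is controlled by Proposition \ref{prop2.3}(ii)) followed by $m\to\infty$. Define $\phi_m:\BR\to\BR$ by $\phi_m(s)=\frac1m\bigl(T_{2m}(|s|)-T_m(|s|)\bigr)$, so that $\phi_m$ is Lipschitz, $|\phi_m|\le1$, $\phi_m\equiv0$ on $\{|s|\le m\}$, $\phi_m(s)=\mathrm{sign}(s)$ on $\{|s|\ge2m\}$, and $\phi_m'(s)=\frac1m\,\mathrm{sign}(s)\,\mathbf{1}_{\{m<|s|<2m\}}$. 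Because the data $\Phi_n$ are smooth, $u_n$ is regular enough that $\phi_m(u_n)\eta\in W$ is an admissible test function (via the chain rule, as in the proof of \cite[Lemma 5]{Pe}). Expanding $\nabla(\phi_m(u_n)\eta)=\phi_m'(u_n)\eta\,\nabla u_n+\phi_m(u_n)\nabla\eta$, the key observation is that $\int_Q|\nabla u_n|^p\phi_m'(u_n)\eta\,dt\,dx=I(n,m)$ exactly, while the time term integrates by parts to $-\int_Q\Theta_m(u_n)\eta_t\,dt\,dx$, with $\Theta_m(s)=\int_0^s\phi_m$, the boundary contributions vanishing since $\eta(T,\cdot)=0$ and $u_n(0,\cdot)=0$. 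This yields the identity
\begin{equation*}
I(n,m)=\langle\langle\Phi_n,\phi_m(u_n)\eta\rangle\rangle+\int_Q\Theta_m(u_n)\eta_t\,dt\,dx-\int_Q|\nabla u_n|^{p-2}\nabla u_n\cdot\nabla\eta\,\phi_m(u_n)\,dt\,dx.
\end{equation*}

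Next I would dispose of the last two terms. By Proposition \ref{prop2.3}(ii), $u_n\to w=u$ in $L^p(0,T;V)$, so $\nabla u_n\to\nabla u$ in $L^p(Q)$ and (along a subsequence) $u_n\to u$ a.e.; since $\phi_m$ and $\Theta_m$ are Lipschitz with constant $\le1$ and $\phi_m$ is bounded, letting $n\to\infty$ gives $\int_Q\Theta_m(u_n)\eta_t\to\int_Q\Theta_m(u)\eta_t$ and $\int_Q|\nabla u_n|^{p-2}\nabla u_n\cdot\nabla\eta\,\phi_m(u_n)\to\int_Q|\nabla u|^{p-2}\nabla u\cdot\nabla\eta\,\phi_m(u)$. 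Now letting $m\to\infty$ both limits vanish by dominated convergence: $\phi_m(u)\to0$ a.e. with $|\phi_m(u)|\le1$, and $|\Theta_m(u)|\le|u|\,\mathbf{1}_{\{|u|>m\}}\to0$ a.e. with the dominating majorant $|u|\in L^1(Q)$. Hence, since these two terms converge as $n\to\infty$, $\lim_m\limsup_n I(n,m)=\lim_m\limsup_n\langle\langle\Phi_n,\phi_m(u_n)\eta\rangle\rangle$.

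It remains to identify the right-hand side, which is the main obstacle. Writing $\nu=\Phi^{\mbox{\tiny meas},D}$ and using symmetry of the mollifiers together with the decomposition \eqref{eq2.1} defining $\Phi_n$, one has $\langle\langle\Phi_n,v\rangle\rangle=\langle\langle\Phi,v* j_n\rangle\rangle$ for $v\in W$; taking $v=\phi_m(u_n)\eta$, whose support lies in $\mathrm{supp}\,\eta\Subset D$, so that $v* j_n\in C_c^\infty(D)$ for large $n$, gives $\langle\langle\Phi_n,\phi_m(u_n)\eta\rangle\rangle=\int_Q\phi_m(u_n)\eta\,d\nu_n$ with $\nu_n=\nu* j_n$. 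Splitting $\nu=\nu_d+\nu_c$ according to \eqref{eq2.03}, the task reduces to showing that the diffuse contribution $\int_Q\phi_m(u_n)\eta\,d\nu^n_d$ tends to $0$ in the iterated limit, while the concentrated contribution $\int_Q\phi_m(u_n)\eta\,d\nu^n_c$ tends to $\int_D\eta\,d\nu_c$. Heuristically this holds because $\phi_m(u_n)$ is supported where $|u_n|$ is large — a region of vanishing $p$-capacity as $m\to\infty$, hence invisible to the diffuse measure $\nu_d$ — whereas on the set of zero capacity carrying $\nu_c$ the solutions blow up, so $\phi_m(u_n)\to\mathrm{sign}(u_n)$ there and the positive and negative truncation regions recover precisely $(\nu_c)^+$ and $(\nu_c)^-$, combining with the sign pattern of $\phi_m'$ to produce the signed measure $(\Phi^{\mbox{\tiny meas},D})_c$. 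The rigorous control of both contributions is exactly the content of \cite[Lemma 5]{Pe}, which by the Remark preceding this lemma is available for every $p>1$; invoking it furnishes $\lim_m\limsup_n\langle\langle\Phi_n,\phi_m(u_n)\eta\rangle\rangle=\int_D\eta\,d(\Phi^{\mbox{\tiny meas},D})_c$ and completes the proof. I expect the delicate point to be the passage to the limit against the singular measure $\nu_c$, since $\phi_m(u_n)\eta$ is neither continuous nor monotone in a way that is compatible with mere weak-$*$ convergence of $\nu^n_c$, which is precisely why the dedicated estimate of \cite[Lemma 5]{Pe} is needed.
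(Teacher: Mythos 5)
Your reduction is sound and in fact coincides with the first half of the paper's own proof: testing (\ref{eq2.6}) with $\phi_m(u_n)\eta$, disposing of the terms $\int_Q\Theta_m(u_n)\eta_t\,dt\,dx$ and $\int_Q|\nabla u_n|^{p-2}\nabla u_n\cdot\nabla\eta\,\phi_m(u_n)\,dt\,dx$ via Proposition \ref{prop2.3}, and identifying $\langle\langle\Phi_n,\phi_m(u_n)\eta\rangle\rangle=\int_Q\phi_m(u_n)\eta\,d(\nu*j_n)$ is exactly how the paper arrives at its identity $I(n,m)=\int_Q\theta(u_n)\eta\,d\nu_n+\omega(n,m)$. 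The genuine gap is everything after that. You write that ``the rigorous control of both contributions is exactly the content of \cite[Lemma 5]{Pe}'' and that invoking it furnishes the limit. It does not: that lemma is a statement about measures and capacity alone. As used in the paper, it produces, for each $\delta>0$, a compact $K_\delta\subset E$ (where $E$ is the null-capacity set carrying $\nu_c$), an open $U_\delta\supset K_\delta$, and a cutoff $\psi_\delta\in C_c^1(U_\delta)$, $0\le\psi_\delta\le1$, satisfying (\ref{eq2.8})--(\ref{eq2.09}); it involves neither the solutions $u_n$ nor the mollified measures $\nu^n_d,\nu^n_c$, so it cannot by itself evaluate $\lim_m\limsup_n\int_Q\phi_m(u_n)\eta\,d\nu_n$. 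The missing work is twofold. For the concentrated part, the paper performs a second, independent test-function computation: it plugs $\psi(u_n)\psi_\delta\eta$ (with $\psi=\phi_m-1$) into (\ref{eq2.6}) and uses (\ref{eq2.9}), (\ref{eq2.09}) together with Proposition \ref{prop2.3} to derive the energy estimate $\int_Q\psi(u_n)\psi_\delta\eta\,d\nu_n=\omega(n,\delta)$, which, combined with $\int_Q(1-\psi_\delta)\,d|\nu_c|\le\delta$ and the diffuse-part estimate, yields $\int_Q(\phi_m(u_n)-1)\eta\,d\nu^n_c=\omega(n,\delta)$. For the diffuse part, one needs $|\int_Q\phi_m(u_n)\eta\,d\nu^n_d|\le\|\eta\|_\infty\int_{\{|u_n|\ge m\}}d|\nu_d|^n=\omega(n,m)$, which is not formal: it is where the paper invokes \cite[Theorem 1.2, Proposition 3.3]{PPP2}. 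Your sentence that the region $\{|u_n|\ge m\}$ is ``invisible to the diffuse measure'' is precisely the assertion requiring proof, uniformly in $n$; neither this nor the $\psi_\delta$-energy argument appears in your proposal, and together they constitute the actual core of the proof.

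A secondary but real inconsistency: the function you define, $\phi_m(s)=\frac1m\bigl(T_{2m}(|s|)-T_m(|s|)\bigr)$, is even, not odd; it equals $1$, not $\mathrm{sign}(s)$, on $\{|s|\ge2m\}$. Fortunately the even truncation is the right one --- your formula for $\phi_m'$, and hence the identity $\int_Q|\nabla u_n|^p\phi_m'(u_n)\eta\,dt\,dx=I(n,m)$, is correct for it, whereas a genuinely odd truncation would produce the \emph{sum} of the two integrals in $I(n,m)$ rather than their difference. But your heuristic for the concentrated part, namely that $\phi_m(u_n)\to\mathrm{sign}(u_n)$ so that the two truncation regions recover $(\nu_c)^+$ and $(\nu_c)^-$, rests on the odd reading and is not the mechanism here: with the even truncation, the claim to be proved is that $\int_Q(\phi_m(u_n)-1)\eta\,d\nu^n_c\to0$ in the iterated limit, the sign of $\nu_c$ being carried by the measure itself --- and that claim is exactly what the omitted $\psi_\delta$-argument is designed to establish.
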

\begin{proof}
Set $\nu=\Phi^{\mbox{\tiny meas},D}$, $\nu_n=(\Phi_n)^{\mbox{\tiny meas},D}$ and $\theta_{m}(s)=\frac{1}{m}(T_{2m}(s)-T_m(s))$, $\theta=|\theta_m|$, $\psi(s)=\theta(s)-1$, $\Psi(t)=\int_0^t\psi(s)\,ds$, $\Theta(t)=\int_0^t\theta(s)\,ds$. We extend $\nu,\nu_n$ to measures on  $Q$ by putting $\nu(Q\setminus D)=\nu_n(Q\setminus D)=0$.
Observe that $|\nu_n|\ll dt\otimes dx$, so by a standard approximation argument, for all  $w\in W$ with compact support in $D$,
\[
\langle\langle\Phi_n,w\rangle\rangle=\int_Qw\,d\nu_n.
\]
Moreover, for every fixed $w\in W$ with compact support in $D$, there exists $N\ge 1$ such that
\begin{equation}
\label{eq2.star.conv}
\int_Qw\,d\nu_n=\int_Qw\,d(\nu*j_n),\quad n\ge N.
\end{equation}
Indeed, for sufficiently large $n\ge 1$,
\begin{align*}
\int_Qw\,d(\nu*j_n)&
=\int_Q(w*j_n)\,d\nu=\langle\langle\Phi,w*j_n\rangle\rangle\\
&=\int_Qh(w*j_n)\,dt\,dx-\int_QG(\nabla w_n*j_n)\,dt\,dx-\int_Q(w*j_n)_tg\,dt\,dx\\
&=\langle\langle\Phi_n,w\rangle\rangle=\int_Qw\,d\nu_n.
\end{align*}
Let $E\subset Q$ be a Borel set  such that $\mbox{cap}_p(E)=0$ and $\nu_c$ is concentrated on $E$. By regularity of the measure $\nu$ and \cite[Lemma 5]{Pe}, for every $\delta>0$ there exists a compact set $K_\delta\subset E$, an open set $U_\delta\subset D$ such that $K_\delta\subset U_\delta$, and  $\psi_\delta\in C_c^1(U_\delta)$ with  $0\le \psi_\delta\le 1$ such that
\begin{equation}
\label{eq2.8}
|\nu|(U_\delta\setminus K_\delta)\le\delta,\qquad \int_Q (1-\psi_\delta)\,d|\nu_c|\le \delta,
\end{equation}
\begin{equation}
\label{eq2.9}
\|(\psi_\delta)_t\|_{L^1(Q)+L^{p'}(0,T;W^{-1,p'}(\Omega))}+ \|\psi_\delta\|_{L^p(0,T;V)}\le \delta,
\end{equation}
\begin{equation}
\label{eq2.09}
\psi_\delta\rightarrow 0\quad\mbox{weakly${}^*$ in }L^\infty(Q)\mbox{ as } \delta\downarrow0.
\end{equation}
Let $\eta\in C_c^\infty(D)$. Taking $\psi(u_n)\psi_\delta\eta$ as a test function in (\ref{eq2.6}), we obtain
\begin{align*}
\int_Q\psi(u_n)\psi_\delta\eta\,d\nu_n&=\int_Q(u_n)_t\psi(u_n)\psi_\delta\eta\,dt\,dx\\
&\quad +\int_Q|\nabla u_n|^{p-2}\nabla u_n\nabla (\psi(u_n)\psi_\delta\eta)\,dt\,dx
=:I_1+I_2.
\end{align*}
Clearly
\begin{align*}
I_1=\int_Q(\Psi(u_n))_t\psi_\delta\eta\,dt\,dx
=-\int_Q\Psi(u_n)(\psi_\delta\eta)_t\,dt\,dx
&=-\int_Q\Psi(u_n)(\psi_\delta)_t\eta\,dt\,dx\\
&\quad-\int_Q\Psi(u_n)\psi_\delta\eta_t\,dt\,dx.
\end{align*}
Since $\Psi$ is continuous and bounded, it follows from Proposition \ref{prop2.3}
and (\ref{eq2.9}) that $I_1=\omega(n,\delta)$.
We have
\begin{align}
\label{eq2.14}
I_2&=\int_Q|\nabla u_n|^p\psi'(u_n)\psi_\delta\eta\,dt\,dx
+\int_Q|\nabla u_n|^{p-2}\nabla u_n\nabla\psi_\delta \psi(u_n)\eta\,dt\,dx\nonumber\\
&\quad+\int_Q|\nabla u_n|^{p-2}\nabla u_n\psi(u_n)\nabla \eta\psi_\delta\,dt\,dx.
\end{align}
Using Proposition \ref{prop2.3} and (\ref{eq2.09}) shows that $\int_Q|\nabla u_n|^p\psi'(u_n)\psi_\delta\eta\,dt\,dx=\omega(n,\delta)$. Applying H\"older's inequality,
Proposition \ref{prop2.3} and (\ref{eq2.9}) also shows that the last two integrals on the right hand-side of (\ref{eq2.14}) are quantities of the form $\omega(n,\delta)$. Hence   $I_2=\omega(n,\delta)$, and consequently
\begin{equation}
\label{eq2.11}
\int_Q\psi(u_n)\psi_\delta\eta\,d\nu_n=\omega(n,\delta).
\end{equation}
Since $K_\delta\subset E$, $\mbox{cap}_p(K_\delta)=0$. Therefore, by (\ref{eq2.8}), $|\nu_d|(U_\delta)=|\nu_d|(U_\delta\setminus K_\delta)\le \delta$.
We also have $|\int_Q\psi(u_n)\psi_\delta\eta\,d\nu^n_d|\le\|\eta\|_{\infty}\int_Q\psi_{\delta}d|\nu_d|^n$ with $|\nu_d|^n=|\nu_d|*j_n$,
which converges to $\|\eta\|_{\infty}\int_Q\psi_{\delta}d|\nu_d|$ as $n\rightarrow\infty$ since $|\nu_d|^n\rightarrow|\nu_d|$ locally weakly${}^*$.
Thus $\int_Q\psi(u_n)\psi_\delta\eta\,d\nu^n_d=\omega(n,\delta)$. By this, (\ref{eq2.star.conv})  and (\ref{eq2.11}),
\begin{equation}
\label{eq2.12}
\int_Q\psi(u_n)\psi_\delta\eta\,d\nu^n_c=\omega(n,\delta).
\end{equation}
Taking $\theta(u_n)\eta$ as a test function in (\ref{eq2.6}) we obtain
\begin{align}
\label{eq2.13}
\int_Q\theta(u_n)\eta\,d\nu_n&=\int_Q(u_n)_t\theta(u_n)\eta\,dt\,dx+\int_Q|\nabla u_n|^{p-2}\nabla u_n\nabla (\theta(u_n)\eta)\,dt\,dx\nonumber\\
&=\int_Q(\Theta(u_n))_t\eta\,dt\,dx +\int_Q|\nabla u_n|^{p}\theta'(u_n)\eta\,dt\,dx\nonumber\\
&\quad+\int_Q|\nabla u_n|^{p-2}\nabla u_n\theta(u_n)\nabla\eta\,dt\,dx.
\end{align}
By the definition of $\theta$,
\[
\int_Q|\nabla u_n|^{p}\theta'(u_n)\eta\,dt\,dx=I(n,m).
\]
We have
\[
\Big|\int_Q(\Theta(u_n))_t\eta\,dt\,dx\Big|
=\Big|\int_Q\Theta(u_n)\eta_t\,dt\,dx \Big|
\le \int_{\{|u_n|\ge m\}}|u_n||\eta_t|\,dt\,dx
\]
and
\[
\Big|\int_Q|\nabla u_n|^{p-2}\nabla u_n\theta(u_n)\nabla\eta\,dt\,dx\Big|
\le\int_{\{|u_n|\ge m\}} |\nabla u_n|^{p-1}|\nabla\eta|\,dt\,dx,
\]
so by Proposition \ref{prop2.3},
\[
\int_Q(\Theta(u_n))_t\eta\,dt\,dx +\int_Q|\nabla u_n|^{p-2}\nabla u_n\theta(u_n)\nabla\eta\,dt\,dx=\omega(n,m).
\]
By the above and (\ref{eq2.13}),
\begin{equation}
\label{eq2.17}
I(n,m)=\int_Q\theta(u_n)\eta\,d\nu_n+\omega(n,m).
\end{equation}
By \cite[Theorem 1.2, Proposition 3.3]{PPP2},
\begin{equation}
\label{eq2.20}
\Big|\int_Q\theta(u_n)\eta\,d\nu^n_d\Big|\le \|\eta\|_\infty\int_{\{|u_n|\ge m\}}\,d|\nu_d|^n=\omega(n,m).
\end{equation}
Furthermore, by the definition of $\psi$,
\begin{equation}
\label{eq2.21}
\int_Q\theta(u_n)\eta\,d\nu^n_c=\int_Q\eta\,d\nu^n_c+\int_Q\psi(u_n)\eta\,d\nu^n_c,
\end{equation}
and by  (\ref{eq2.8})  and (\ref{eq2.12}),
\begin{equation}
\label{eq2.22}
\int_Q\psi(u_n)\eta\,d\nu^n_c
=\int_Q\psi(u_n)\eta(1-\psi_\delta)\,d\nu^n_c +\int_Q\psi(u_n)\eta\psi_\delta\,d\nu^n_c
=\omega(n,\delta).
\end{equation}
Since $\int_Q\theta(u_n)\eta\,d\nu_n$ does not depend on $\delta$, from (\ref{eq2.star.conv}) and  (\ref{eq2.20})--(\ref{eq2.22}) we conclude that
\[
\int_Q\theta(u_n)\eta\,d\nu_n=\int_Q\eta\,d\nu^n_c+\omega(n,m).
\]
Combining this with (\ref{eq2.17}) we see that
\[
I(n,m)=\int_Q\eta\,d\nu^n_c+\omega(n,m),
\]
which implies (\ref{eq2.7}).
\end{proof}

In case  $\Phi$  is positive, Lemma \ref{prop2.1} is essentially
\cite[Proposition 5]{PP}. Note that \cite[Proposition 5]{PP} is proved for any positive $\Phi\in\MM_b(Q)$. In Lemma \ref{prop2.1} we drop the assumption that  $\Phi$ is positive, but we additionally assume that $\Phi\in W'$.

\begin{theorem}
\label{th2.4}
Let  $\mu\in\MM_b(Q)$. If \mbox{\rm(\ref{eq2.2})} is satisfied
for all $\eta\in C^\infty_c(Q)$, then  $\mu\in\MM_{0,b}(Q)$.
\end{theorem}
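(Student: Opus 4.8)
The plan is to show that the concentrated part $\mu_c$ in the decomposition $\mu=\mu_d+\mu_c$ of (\ref{eq2.03}) vanishes. First I would dispose of the regular part: since $f\,dt\,dx$ is absolutely continuous with respect to Lebesgue measure and every set of zero $p$-capacity is Lebesgue-null, the measure $f\,dt\,dx$ is diffuse. Setting $\Phi:=\mbox{div}(G)+g_t$, the distribution $\Phi$ coincides with the bounded measure $\mu-f\,dt\,dx$, and it lies in $W'$ because $\int_QG\cdot\nabla u\,dt\,dx$ and $\int_0^T\langle u_t,g\rangle\,dt$ are bounded on $W$ by H\"older's inequality (for $u\in W$ one has $\nabla u\in L^p(Q)$ and $u_t\in L^{p'}(0,T;V')$). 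By uniqueness of (\ref{eq2.03}) the concentrated part of the measure $\mu-f\,dt\,dx$ equals $\mu_c$, so it suffices to prove that this concentrated part is $0$.

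Next I would localize so that Lemma \ref{prop2.1} applies. Fix an open set $D$ with compact closure contained in $Q$ and choose $\phi\in C^\infty_c(Q)$ with $\phi\equiv1$ on $D$. I define $\Phi_\phi\in W'$ through decomposition (\ref{eq2.1}) with $h=0$, $G$ replaced by $\phi G$ and $g$ replaced by $\phi g$, which now have compact support in $Q$. The crucial point is that, because the original decomposition carries no $L^1$-source ($h=0$), the localization creates no new term: the identities $\mbox{div}(\phi G)=\phi\,\mbox{div}(G)+\nabla\phi\cdot G$ and $(\phi g)_t=\phi g_t+\phi_t g$ show that $\Phi_\phi=\mbox{div}(\phi G)+(\phi g)_t$ still has vanishing $L^{p'}(0,T;L^2(\Omega))$-part, and that $\langle\langle\Phi_\phi,\eta\rangle\rangle=\langle\langle\Phi,\eta\rangle\rangle=\int_D\eta\,d(\mu-f\,dt\,dx)$ for every $\eta\in C^\infty_c(D)$, since $\phi\equiv1$ on the support of such $\eta$. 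Hence $\Phi_\phi\in\MM_b(D)\cap W'$ and, arguing as above, $(\Phi_\phi^{\mbox{\tiny meas},D})_c=\mu_c|_D$.

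Now I would invoke Lemma \ref{prop2.1}: if $u_n$ denotes the weak solution of (\ref{eq2.6}) with source $(\Phi_\phi)_n$, then for every $\eta\in C^\infty_c(D)$,
\[
\int_D\eta\,d(\Phi_\phi^{\mbox{\tiny meas},D})_c=\lim_{m\rightarrow\infty}\limsup_{n\rightarrow\infty}I(n,m).
\]
The heart of the matter is that the right-hand side is $0$. By Proposition \ref{prop2.3}(ii), $u_n\rightarrow u$ in $L^p(0,T;V)$, where $u$ is the weak solution of (\ref{eq2.4}) for $\Phi=\Phi_\phi$; in particular $\nabla u_n\rightarrow\nabla u$ strongly in $L^p(Q)$, so $|\nabla u_n|^p\rightarrow|\nabla u|^p$ in $L^1(Q)$. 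Since
\[
|I(n,m)|\le\frac{\|\eta\|_\infty}{m}\int_{\{m\le|u_n|\le2m\}}|\nabla u_n|^p\,dt\,dx,
\]
I would pass to the limit in $n$ (for all but countably many levels $m$, using the a.e.\ convergence of the indicators $\mathbf{1}_{\{m\le|u_n|\le2m\}}$ together with the $L^1$-convergence of $|\nabla u_n|^p$) to bound $\limsup_n|I(n,m)|$ by $\frac{\|\eta\|_\infty}{m}\int_{\{|u|\ge m\}}|\nabla u|^p\,dt\,dx$. As $m\rightarrow\infty$ this tends to $0$, because $|\nabla u|^p\in L^1(Q)$ forces $\int_{\{|u|\ge m\}}|\nabla u|^p\,dt\,dx\rightarrow0$. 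Thus $\int_D\eta\,d(\Phi_\phi^{\mbox{\tiny meas},D})_c=0$ for all $\eta\in C^\infty_c(D)$, so $\mu_c|_D=0$; letting $D$ increase to $Q$ yields $\mu_c=0$, i.e. $\mu\in\MM_{0,b}(Q)$.

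The genuinely hard work is already packaged in Lemma \ref{prop2.1}, which identifies $\lim_m\limsup_n I(n,m)$ with the concentrated part. Granting it, the only two points requiring care are the clean localization that preserves the absence of an $L^1$-source, so that $\Phi_\phi$ remains of the form $\mbox{div}+\partial_t$ with compactly supported data and no new $h$-term appears, and the passage to the limit in $I(n,m)$. In the latter, the strong $L^p(Q)$-convergence of the gradients furnished by Proposition \ref{prop2.3}(ii) is exactly the property that rules out any energy concentration at high level sets, and it is here that the restriction to $\Phi\in W'$ (rather than an arbitrary diffuse measure) is used.
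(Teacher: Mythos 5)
Your proof is correct and takes essentially the same approach as the paper: subtract $f\,dt\,dx$, localize $\mbox{\rm div}(G)+g_t$ by a cutoff equal to $1$ on $D$ with compact support in $Q$, and apply Lemma \ref{prop2.1} to conclude that the concentrated part vanishes on every such $D$. The only (inessential) difference is the final estimate of $I(n,m)$: the paper uses just the uniform bound $\sup_n\|u_n\|_{L^p(0,T;V)}<\infty$ supplied by Proposition \ref{prop2.3}, which gives $|I(n,m)|\le \|\eta\|_\infty\, m^{-1}\sup_n\|u_n\|^p_{L^p(0,T;V)}\rightarrow 0$ uniformly in $n$, whereas you invoke the full strong convergence $u_n\rightarrow u$ together with an a.e.\ level-set limiting argument---correct, but heavier than needed.
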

\begin{proof}
Let $\nu=\mu-f\,dt\,dx$ and $\Phi= \dyw (G)+g_t$, i.e.
\[
\langle\langle\Phi,\eta\rangle\rangle
=-\int_Q G\nabla\eta\,dt\,dx-\int_0^T\langle \eta_t,g\rangle \,dt,\quad \eta\in W.
\]
Clearly $\Phi\in W'$. By (\ref{eq2.2}),
$\langle\langle\Phi,\eta\rangle\rangle =\int_Q\eta\,d\mu-\int_Q\eta f\,dt\,dx$ for
$\eta\in C^{\infty}_c(Q)$. From this and the assumption that $\mu\in\MM_b(Q)$ it follows
that $\Phi\in\MM_b(Q)\cap W'$ and $\Phi^{\mbox{\tiny meas},Q}=\nu$.
Fix an open subset of $Q$ such that $\bar D\subset Q$ and choose a nonnegative function $\theta\in C_c^\infty(Q)$ such that $\theta=1$ on $D$. Set  $G^\theta= G\theta$, $g^\theta= g\theta$, and then $\Phi^\theta=\dyw(G^\theta)+(g^\theta)_t$, i.e.
\[
\langle\langle\Phi^\theta,\eta\rangle\rangle
=-\int_Q G^\theta \nabla\eta\,dt\,dx-\int_0^T\langle\eta_t,g^\theta\rangle \,dt,
\quad \eta\in W.
\]
Next, set $G^\theta_n=G^\theta*j_n$,  $g^\theta_n=g^\theta*j_n$, and then
$\Phi^\theta_n=\dyw(G^\theta_n)+(g^\theta_n)_t$, i.e.
\begin{equation}
\label{eq2.24}
\langle\langle\Phi^\theta_n,\eta\rangle\rangle
=-\int_Q G^\theta_n \nabla\eta\,dt\,dx-\int_0^T\langle \eta_t,g^\theta_n\rangle \,dt,\quad \eta\in W.
\end{equation}
Clearly $\Phi^\theta,\Phi^\theta_n\in W'$. Since $\theta=1$ on $D$, we have
$\langle\langle\Phi^{\theta},\eta\rangle\rangle=\langle\langle\Phi,\eta\rangle\rangle$ for
$\eta\in C^{\infty}_c(D)$, so $\Phi^{\theta}\in\MM_b(D)\cap W'$. Integrating by parts, we conclude from (\ref{eq2.24}) that $\Phi^\theta_n\in \MM_b(D)\cap W'$.
Moreover,
\begin{equation}
\label{eq2.23}
(\Phi^\theta)^{\mbox{\tiny meas},D}=\nu_{|D},
\end{equation}
where $\nu_{|D}$ denotes the restriction of $\nu$ to $D$. Indeed, for  $\eta\in C^{\infty}_c(D)$ we have $\langle\langle\Phi^{\theta},\eta\rangle\rangle=\int_D\eta\,d(\Phi^\theta)^{\mbox{\tiny meas},D}$,
and on the other hand,  $\langle\langle\Phi^{\theta},\eta\rangle\rangle=\langle\langle\Phi,\eta\rangle\rangle
=\int_D\eta\,d\nu=\int_D\eta\,d\nu_{|D}$.
Let $u_n$ be a solution to (\ref{eq2.6}) with $\Phi_n$ replaced by $\Phi^\theta_n$.  From Proposition \ref{prop2.3} it follows that
$\sup_{n\ge 1}\| u_n\|_{L^p(0,T;V)}<\infty$.
Hence, for every $\eta\in C_c^\infty(Q)$,
\[
\lim_{m\rightarrow \infty}\limsup_{n\rightarrow \infty}\frac{1}{m}
\int_{\{m\le |u_n|\le 2m\}}|\nabla u_n|^p\eta\,dt\,dx\le
\|\eta\|_\infty\lim_{m\rightarrow \infty}\limsup_{n\rightarrow \infty}\frac{1}{m}\|u_n\|^p_{L^p(0,T;V)}=0.
\]
By Lemma \ref{prop2.1} and (\ref{eq2.23}),  this implies that $(\nu_{|D})_c=0$. Hence $(\mu_c)_{|D}=(\mu_{|D})_c=0$ since $f\,dt\,dx\in \MM_{0,b}(Q)$.
Since $D$ was an arbitrary open subset of $Q$ with $\bar D\subset Q$, we see that $\mu_c=0$.
\end{proof}

\subsection*{Acknowledgements}

{\small This work was supported by Polish National Science Centre
(Grant No. 2016/23/B/ST1/01543).}

\end{document}